\newcommand{\msf}[1]{\mathsf {#1}}
\newcommand{\mcal}[1]{{\mathcal {#1}}} %%CAPITALS only!
\newtheorem{theorem}{Theorem}   [section]
\newtheorem{lemma}[theorem]{Lemma}
\newtheorem{corollary}[theorem]{Corollary}
\newtheorem{proposition}[theorem]{Proposition}
\theoremstyle{definition}
\theoremstyle{definition}
\theoremstyle{definition}\newtheorem{example}[theorem]{Example}
\theoremstyle{definition}
\theoremstyle{definition}
\theoremstyle{definition}
\theoremstyle{plain}\newtheorem*{theorem*}{Theorem}
\theoremstyle{plain}\newtheorem*{corollary*}{Corollary}
\theoremstyle{remark}\newtheorem*{remark*}{Remark}
\theoremstyle{remark}\newtheorem*{remarks*}{Remarks}
\theoremstyle{definition}\newtheorem*{conjecture*}{Conjecture}
\theoremstyle{definition}\newtheorem*{definition*}{Definition}
\theoremstyle{definition}\newtheorem*{example*}{Example}
\theoremstyle{definition}\newtheorem*{question*}{Question}
\theoremstyle{definition}\newtheorem*{questions*}{Questions}
\theoremstyle{definition}\newtheorem*{hypothesis*}{Hypothesis}
\def\qed{\ifhmode\unskip\nobreak\fi\ifmmode\ifinner\else\hskip5pt\fi\fi
 \hfill\hbox{\hskip5pt\vrule width4pt height6pt depth1.5pt\hskip1pt}}
\newcommand{\ov}[1]{\ensuremath{\overline{#1}}}
\newcommand{\sgn}{\ensuremath{\operatorname{\mathsf{sign}}}}
\newcommand{\RR}{\ensuremath{{\mathbb R}}}     %bb R
\newcommand{\R}[1]{\ensuremath{{\mathbb R}^{#1}}} %bb R superscripted 
\newcommand{\Np}{\ensuremath{{\mathbb N}_{+}}}  %bb N^plus
\renewcommand{\S}[1]{\ensuremath{{\bf S}^{#1}}} %bold superscripted S
\newcommand{\om}[1]{\ensuremath{\omega(#1)}}
\newcommand{\emp}{\ensuremath{\emptyset}}
\newcommand {\pd}[1] {\ensuremath{\frac{\partial}{\partial #1}}}
\def\d#1dt{\frac{d#1}{dt}}    %%variable ODE left side
\newcommand{\Lam}{\ensuremath{\Lambda}}
\newcommand{\Gam}{\ensuremath{\Gamma}}
\newcommand{\co}{\colon\thinspace} %% Colon with correct spacing for
\renewcommand{\gg}{\ensuremath{\mathfrak g}}
\renewcommand{\aa}{\ensuremath{\mathfrak a}}
\newcommand{\hh}{\ensuremath{\mathfrak h}}
\newcommand{\p}{\ensuremath{\partial}}
\def\emp{\varnothing}
	\def\mylabel#1{\label{#1}}
\newcommand{\V}{\ensuremath{{\mcal V}}}
\newcommand{\Z}[1]{\ensuremath{{\msf  Z ( #1)}}}
\renewcommand{\om}{\omega}
\begin{document}

%---------------------------------------------------------------
\title{\bf Common zeroes of families of smooth vector fields on surfaces}
\author{{\bf Morris W. Hirsch}
 %% \thanks{I thank Paul Chernoff, Robert Gompf, Henry King, Robion
 %%   Kirby, Frank Raymond, Helena Reis and Joel Robbin for helpful
 %%   discussions.}
  Mathematics Department\\ University of Wisconsin
 at Madison\\ University of California at Berkeley}
%---------------------------------------------------------------
\maketitle
%   
% \begin{flushright} {\footnotesize
% COMMONZEROS/SMOOTH/smooth2tex}   \end{flushright} 
%------------------------------------------------------
\begin{abstract} 
%-----------------------------------------------------
Let $Y$ and $X$ denote $C^k$ vector fields on a possibly noncompact
surface  with empty boundary, $1\le k <\infty$.  Say that $Y$ {\em
  tracks} $X$ if the dynamical system it generates locally permutes
integral curves of $X$.  Let $K$ be a locally maximal compact set
of zeroes of $X$.

\smallskip
\noindent
{\bf Theorem.} {Assume the Poincar\'e-Hopf index of $X$ at $K$ is
  nonzero, and the $k$-jet of $X$ at each point of $K$ is nontrivial.
  If $\gg$ is a supersolvable Lie algebra of $C^k$ vector fields that
  track $X$, then the elements of $\gg$ have a common zero in $K$.}

\smallskip
\noindent
Applications are made to attractors and transformation groups.
 %------------------------------------------------------------
\end{abstract}

\tableofcontents

%------------------------------------------------------------
\section{Introduction}   \mylabel{sec:intro}
%------------------------------------------------------------
$M$  denotes a metrizable real analytic surface with empty
boundary.  The vector space of vector fields on $M$ is
$\V(M)$,  topologized by uniform convergence on
compact sets. 
The subspace of $C^r$ vector fields is $\V^r (M)$.  Here $r\in \Np$
(the set of positive integers), or $r=\infty$ (infinitely
differentiable), or $r=\om$ (analytic).

$X$ denotes a vector field on $M$, with zero set $\Z X$.  A compact
set $K\subset \Z X$ is a {\em block} of zeroes for $X$, or an {\em
  $X$-block}, if it has a precompact open neighborhood $U\subset M$
such that $ \Z X \cap \ov U=K$.  We say $U$ is {\em isolating} for $X$
and for $(X, K)$.

 The {\em index} of the $X$-block $K$ is the integer $\msf i_K
 (X):=\msf i (X,U)$ defined as the Poincar\'e-Hopf index \cite
 {Poincare85, Hopf25} of any sufficiently close approximation to $X$
 having only finitely many zeroes in $U$.
This number is independent of $U$, and is stable under perturbations
of $X$ : If $Y\in \V M$ is
sufficiently close to $X$ then $U$ is isolating for $Y$, and $\msf
i(Y, U)=\msf i (X, U)$(Proposition \ref{th:stability}).\footnote{%%
Equivalently: $\msf i (X, U)$ is the intersection number of $X|U$ with
the zero section of the tangent bundle ({\sc C. Bonatti}
\cite{Bonatti92}).  If $X$ is generated by a smooth local flow $\phi$
then $\msf i (X, U)$ equals the fixed-point index $I (\phi_t|U)$ of
{\sc A. Dold} \cite {Dold72} for sufficiently small $t >0$.}

$K$ is {\em essential} if $\msf i_K (X)\ne 0$, which every isolating
neighborhood of $K$ meets $\Z X$.  This powerful condition implies
that if $Y$ is sufficiently close to $X$ then $\Z Y \cap U
\ne\varnothing$.

We say $Y$ {\em tracks} $X$ provided $X, Y\in \V^1 (M)$, and the
corresponding local flows $\Phi^Y=\{\Phi^Y_t\}_{t\in\RR}$ and
$\Phi^X=\{\Phi^X_t\}_{t\in\RR}$ have the following property: For each
$t\in \RR$ the $C^1$ diffeomorphism $\Phi^Y_t\co \mcal D_t \approx
\mcal R_t$ maps  orbits of $X|\mcal D_t$ to  orbits of $X|\mcal
R_t$.  Equivalently: There exists
$f\co M\,\verb=\=\, \Z X\to \RR$
 such that
$X_p\ne 0 \implies [Y, X]= f(p)X_p$ (see {\sc Hirsch},
 \cite[Prop. 2.4]{Hirsch2015}
  or \cite[Prop. 2.3]{Hirsch2013}).\footnote{Article \cite{Hirsch2013}
    is a preliminary version of \cite{Hirsch2015}.}

  If $X$ spans an ideal in a
Lie algebra $\gg\subset \V^k (M)$,  every element of $\gg$ tracks
$X$. When $X$ is $C^\infty$, the set of $Y\in \V^\infty(M)$ that track $X$
is an infinite dimensional  Lie algebra.

%------------------------------------------------------------
\subsection{Statement of the main results}   \mylabel{sec:main}
%------------------------------------------------------------
Throughout the rest of this article we assume:
\begin{itemize}

\item  $X \in \V^k (M)$, \ 
$k\in \Np$.
\end{itemize}

%$%We say 
 $X$ has {\em order} $\msf{ord}_p (X):=j\in \{1,\dots,k\}$ at $p\in \Z
X$ if $j$ is the smallest number in $\{1,\dots,k\}$ such that the
$j$-jet of $X$ at $p$ is nontrivial.  In other words: Some (and hence
every) $C^{k+1}$ chart $M\supset W' \approx W\subset \R 2$ on $M$,
centered at $p$, represents $X|W$ by a $C^k$ map $F\co W \to \R 2$
whose partial derivatives at the orgin satisfy
%============================================================
\begin{equation}                \label{eq:ordi}
%============================================================
F^{(i)}(0)=0 \ \text{for $i=0,\dots, j-1$}, \qquad F^{(j)} (0)\ne 0.
\end{equation}
%------------------------------------------------------------
If no integer $j$ has this property, $X$ is {\em $k$-flat} at $p$.

When $X$ is analytic and nontrivial on a neighborhood of a continuum
$L\subset \Z X$, the order of $X$ is constant on $L$.

%============================================================
\begin{theorem}         \mylabel{th:MAIN}
%============================================================
Assume $X, Y\in \V^k (M)$ have the following properties:
\begin{description}

\item[(a)] $K\subset\Z X$ is an essential $X$-block,

\item[(b)] $X$ is not $k$-flat at any point of $K$,

\item[(c)] $Y$ tracks $X$.

\end{description}
Then $\Z Y\cap K\ne\varnothing$.
\end{theorem}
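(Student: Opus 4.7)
The plan is to argue by contradiction: assume $\Z{Y}\cap K=\varnothing$ and show that $\msf i_K(X)=0$, contradicting essentialness. Since $Y$ is continuous and $K$ is compact, $Y$ is bounded away from $0$ on some open neighborhood $W\supset K$, and the local flow $\Phi^Y$ is defined on $W$ for small time. The tracking hypothesis says $\Phi^Y_t$ carries $X$-orbits to $X$-orbits. A zero of $X$ is a one-point orbit of $X$; its image under the diffeomorphism $\Phi^Y_t$ is again a one-point orbit, hence a zero of $X$. Thus $\Phi^Y_t(\Z{X})\subseteq \Z{X}$, and by isolation of $K$ we get $\Phi^Y_t(K)\subseteq K$ for all small $|t|$. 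Because $Y$ does not vanish on $K$, every $Y$-trajectory in $K$ is compact and hence a periodic orbit, so
\[
K=C_1\sqcup\dots\sqcup C_m,
\]
a finite disjoint union of smoothly embedded circles.

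Next comes the local normal form. Pick $p\in C_i$ and a flow-box chart $(x,y)$ centered at $p$ with $Y=\partial/\partial y$ and $C_i\cap\text{box}=\{x=0\}$. In these coordinates $\Phi^Y_t$ is the translation $(x,y)\mapsto(x,y+t)$; since it permutes $X$-orbits, every $X$-orbit in the box is a $y$-translate of its trace on $\{y=0\}$, so
\[
X(x,y)=h(x,y)\,V(x),\qquad V(x):=X(x,0),
\]
for a scalar function $h$ with $h(\cdot,0)=1$, positive wherever $V\neq 0$. Writing $V(x)=x^{j}\widetilde V(x)$ via Hadamard's lemma and invoking the non-$k$-flatness of $X$ at $p$, some order $j\leq k$ satisfies $\widetilde V(0)\neq 0$, so $V$ has an isolated zero at $x=0$ and the $X$-zeros in the flow box are exactly $\{x=0\}$. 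In particular the direction field $X/|X|$ coincides with $V(x)/|V(x)|$ on $\{x\neq 0\}$, independent of $y$.

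Now compute the index. The Poincar\'e return map of the flow $\Phi^Y$ on a one-dimensional transversal to $C_i$ is orientation-preserving, so either the normal bundle of $C_i$ in $M$ admits a cylindrical tubular neighborhood $C_i\times(-\delta,\delta)$, in which the local normal forms above glue to a global factorization $X=h\,V(x)$, or one passes to the orientation double cover and argues as below. On each boundary circle $C_i\times\{\pm\delta\}$ the direction of $X$ equals the constant direction $V(\pm\delta)/|V(\pm\delta)|$, so its winding number is $0$ and $\msf i_{C_i}(X)=0$. Summing, $\msf i_K(X)=\sum_i \msf i_{C_i}(X)=0$, contradicting the assumption that $K$ is essential and completing the proof.

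\textbf{Main obstacle.} The heart of the argument is the second step: integrating the tracking relation $[Y,X]=fX$, which only holds off $\Z{X}$, to a factorization $X=h\,V(x)$ valid across the singular stratum $\{x=0\}$, and then showing that $V$ has an isolated zero at $0$. This is exactly where the non-$k$-flatness hypothesis is essential: without it $V$ could vanish to infinite order and the zero set of $X$ in the flow box could be strictly larger than $K$, breaking both the reduction of $\Z{X}$ to a union of circles and the index computation. A secondary technical point is the coherent patching of the local normal forms around a full periodic orbit $C_i$; this relies on the orientation-preservation of the return map of $\Phi^Y$, with the M\"obius case handled by lifting to the orientation double cover.
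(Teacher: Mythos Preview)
Your overall strategy matches the paper's: argue by contradiction, pass to a $Y$-flowbox, show that $K$ is a finite union of embedded circles, and conclude that the index vanishes. The paper packages the last step as ``$X$ is controlled by a line field near $K$, hence $\msf i_K(X)=0$'' (its Lemma on the line field $\Lambda(p)$ together with Proposition~\ref{th:lines3}); your winding-number computation on the boundary circles of a tubular neighbourhood is an equivalent formulation. The handling of the non-orientable normal bundle via a double cover is also the same in both.

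There is, however, a real gap at exactly the point you flag as the main obstacle. From tracking you obtain $X(x,y)=h(x,y)\,V(x)$ only where $V(x)\ne 0$, and you then assert that non-$k$-flatness of $X$ at $p$ gives $V(x)=x^{j}\widetilde V(x)$ with $\widetilde V(0)\ne 0$. But non-$k$-flatness only says some \emph{mixed} partial $\partial_x^{i}\partial_y^{j}X(0,0)$ is nonzero; it does not directly give $V^{(j)}(0)=\partial_x^{j}X(0,0)\ne 0$. To transfer the jet information from $X$ to $V$ via $X=hV$ you would need $h$ to extend $C^k$ across $\{x=0\}$, which you have not shown and which is essentially what is at stake. (Relatedly, your first paragraph deduces that $K$ is a union of periodic $Y$-orbits from compactness and $Y|_K\ne 0$ alone; that is not sufficient\,---\,a compact invariant set without rest points need not consist of closed orbits\,---\,and the honest argument is the local one you give afterwards.) The paper closes this gap by a different route: it uses that $\msf{ord}_q(X)$ is constant along $Y$-orbits in $\Z X$ (Proposition~\ref{th:invariance}(ii)), so in your coordinates $\msf{ord}_{(0,y)}(X)\equiv l$; this forces all transverse partials of order $<l$ to vanish along $\{x=0\}$, and Taylor's formula in the transverse variable then yields $X(x,y)=x^{l}g(x,y)$ with $g$ continuous and $g(0,0)\ne 0$. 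That single factorization simultaneously gives $\Z X\cap\text{box}=\{x=0\}$, the fact that $K$ is a $1$-manifold, and the continuous line field, without ever needing $h$ to extend.
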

%------------------------------------------------------------
\noindent
When $X$ and $Y$ are commuting analytic vector fields, this is a
special case of a remarkable theorem of {\sc C. Bonatti} \cite{Bonatti92}--- the
inspiration for the present paper. %%  Theorem \ref{th:liealg} gives new
%% information, even in the special case that $M$ is compact, $X$ is
%% analytic, $\gg\subset \V^\om (M)$, and $\gg$ is abelian  and finite
%% dimensional with $\dim (\gg) \ge 3$.

Theorem \ref{th:MAIN} is proved by demonstrating the strong form of
the contrapositive stated below.

A {\em line field} $\Lam$ on a set $N\subset M$ is a (continuous)
section $p\mapsto \Lam_p$ of the fibre bundle over $N$ whose fibre
over $p\in N$ is the circle of unoriented lines through the origin in
the tangent space $T_p (M)$.  If $Y_p\in\Lam_p$ for all $p\in N$ then
$\Lam$ {\em controls $Y$ in $N$}.

%============================================================
\begin{theorem}             \mylabel{th:MAINbis}
%============================================================
Assume:
\begin{description}
\item[(a)]   $X\in \V^k (M)$ is not $k$-flat at any point of the
  $X$-block $K$,

 \item[(b)] $Y\in \V^k (M)$ tracks $X$,

\item[(c)] $\Z Y\cap K=\emp$,

\item[(d)] $U\subset M$ is an isolating neighborhood for $(X, K)$.
\end{description}
Then:
\begin{description}

\item[(i)]  $\msf i_K (X) =0$.

\item[(ii)] $K$ has only finitely many components, and each component is
  a $C^k$-embedded circle. 

\item[(iii)] $X$ is controlled by a unique line field in $U$.

\item[(iv)] $X$ has index zero at each component of $K$. 

\item[(v)] $X$ can be
  $C^k$-approximated by vector fields that have no zeroes in $U$ and
  agree with $X$ outside $U$.

\end{description}
\end{theorem}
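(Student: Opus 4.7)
The plan is to use the tracking hypothesis, via a local straightening of $Y$ near $K$, to produce a unique continuous line field $\Lam$ controlling $X$ on an isolating neighborhood of $K$, and then to read off all five conclusions from $\Lam$ together with the $Y$-dynamics on $K$.

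First, since $Y$ is nonvanishing on the compact set $K$, shrink $U$ so that $Y$ is nonvanishing on $\ov U$. Because $\Phi^Y$ permutes integral curves of $X$ it sends $\Z X$ to itself, and by the isolating property $\Z X\cap \ov U=K$, so any $Y$-orbit starting at $p\in K$ that remains in $\ov U$ must lie in $K$. A first-exit-time argument rules out exiting through $\partial U$, since the orbit would have to exit at a point of $K\cap\partial U=\emp$. Hence the full $Y$-orbit through $p$ lies in $K$; being a compact $Y$-orbit on which $Y$ has no zero, it is a $C^k$-embedded periodic orbit. A flow-box argument for $Y$ then shows these circles have pairwise disjoint open tubular neighborhoods in $U$, and compactness of $K$ bounds their number, yielding~(ii).

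Next I construct the line field. Around any $p\in K$ choose a $C^k$ chart $(t,s)$ with $Y=\partial_t$ and $p=(0,0)$, writing $X=a(t,s)\partial_t+b(t,s)\partial_s$. The tracking relation $[Y,X]=fX$ on $\{X\ne 0\}$ becomes $\partial_t a=fa$, $\partial_t b=fb$, so the unoriented direction determined by $(a(t,s),b(t,s))$ is independent of $t$ on each arc of $\{X\ne 0\}$ inside a slice $s=s_0$. Using that $X$ is not $k$-flat at $p$, one shows the lowest nonvanishing Taylor jet $(\alpha,\beta)$ of $X$ at $p$ inherits the same ODE structure, which forces $\alpha$ and $\beta$ to be proportional as homogeneous polynomials and therefore to define a constant direction. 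This common direction extends the direction of $X$ continuously across $K$. Gluing the local line fields produces a unique continuous $\Lam$ on $U$ controlling $X$, establishing~(iii).

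Finally I deduce~(i), (iv), (v). Near each circle component $C\subset K$, the line field $\Lam$ is orientable on a tubular neighborhood (the orientation double cover of $\Lam$ over an annulus would be nontrivial only if $\Lam$ rotated by $\pi$ around $C$, but the explicit local form from the previous paragraph prohibits this). One picks a continuous nowhere-zero section $\sig$ of $\Lam$ and forms $X_\eps=X+\eps\sig$, cut off smoothly to agree with $X$ outside $U$; since $X$ is controlled by $\Lam$ and $\sig$ lies in $\Lam$, $X_\eps$ is a $C^k$-small nonvanishing perturbation of $X$ on a neighborhood of $K$, agreeing with $X$ outside $U$, which proves~(v). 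Proposition~\ref{th:stability} then yields $\msf i_K(X)=0$, and the same argument applied componentwise gives~(iv), hence~(i).

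The main obstacle is the continuous extension of $\Lam$ across $K$ in the second paragraph. This is where non-$k$-flatness is essential: the lowest nonvanishing jet of $X$ at each $p\in K$ must have a well-defined principal direction, and this direction must vary continuously as $p$ moves along $K$ (where the order of $X$ could a priori jump). The combinatorics of the ODE system $\partial_t a=fa$, $\partial_t b=fb$ applied at the jet level is what makes both continuity and uniqueness possible; without the jet hypothesis the direction of $X$ near a flat zero has no controlled behavior, and the line field could fail to extend.
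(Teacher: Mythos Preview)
Your overall plan matches the paper's, but two steps fail as written.

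First, in your argument for (ii) you assert the $Y$-orbit through $p\in K$ is ``a compact $Y$-orbit''; containment in the compact set $K$ does not by itself make the orbit compact. Ruling out nonperiodic recurrence actually requires the non-$k$-flatness hypothesis (infinitely many returns through a flow box would force $X$ to vanish on slices $\{s=s_i\}$ with $s_i\to 0$, and repeated Rolle would kill the $k$-jet of $X$ at $p$). The paper sidesteps this by first proving the local factorization $F_p(x,y)=y^{\,l}g(x,y)$ with $g\ne 0$---this uses that the order of $X$ is constant along $Y$-orbits in $K$ (Proposition~\ref{th:invariance}(ii) and Lemma~\ref{th:lk}), a fact you never invoke---and reading off immediately that $K$ is locally an arc, hence a finite union of circles. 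The same factorization is what makes the line-field extension in (iii) immediate; your jet argument can be made to work, but as stated it does not explain why the direction of $(a,b)$ actually converges to the direction of the lowest jet.

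Second, your perturbation $X_\eps=X+\eps\sigma$ need not be nonvanishing: since $X$ and $\sigma$ both lie in $\Lam$ they can cancel. Already for $X=y\,\partial_x$, $\sigma=\partial_x$ one gets $X_\eps=(y+\eps)\partial_x$, which vanishes on the line $y=-\eps\subset U$. So neither (v) nor the index conclusions follow from this construction. Moreover $\sigma$ is only continuous, so $X_\eps$ is not a $C^k$ approximation in any case, and (v) would not follow even if it were nonvanishing. The paper instead deduces (i) and (iv) directly from the existence of $\Lam$ via Proposition~\ref{th:lines3}, whose double-cover argument also handles the nonorientable case you dismiss without real justification, and it obtains (v) by citing separate approximation results rather than an explicit perturbation.
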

%------------------------------------------------------------
 The is in Section \ref{sec:proof}.

%------------------------------------------------------------
\subsection{Applications}   \mylabel{sec:apps}
%------------------------------------------------------------
%%  Section  \ref{sec:index}
%% develops material used to calculate  indices. 

%------------------------------------------------------
%% \subsection{Zero sets of Lie algebras of vector fields}
%%   \mylabel{sec:liealg}
%------------------------------------------------------------

Let $\gg\subset \V^k (M)$ denote a Lie algebra--- a linear subspace
closed under Lie brackets.  The  zero set of $\gg$ is defined as $\Z \gg
:=\bigcap_{Y\in\gg}\Z Y$.  If every $Y\in \gg$ tracks $X$, then {\em
  $\gg$ tracks $X$}. We call $\gg$ {\em supersolvable} if it is
faithfully represented by upper triangular real matrices.

%============================================================
\begin{theorem}         \mylabel{th:liealg}
%============================================================
Assume: 
\begin{description}

\item[(a)]  $K$ is an essential  $X$-block,

\item[(b)]    $X$ is not $k$-flat at any point of $K$,

\item[(c)] $\gg\subset \V^k (M)$ is a supersolvable
  Lie algebra tracking $X$.
\end{description}
Then $\Z\gg \cap K\ne\varnothing$.  
\end{theorem}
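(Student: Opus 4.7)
The plan is to prove Theorem~\ref{th:liealg} by induction on $n = \dim \gg$. The base case $n = 1$ is immediate from Theorem~\ref{th:MAIN}: with $\gg = \RR Y$, one has $\Z\gg = \Z Y$, and the hypotheses that $Y$ tracks $X$, $K$ is essential, and $X$ is not $k$-flat on $K$ give $\Z Y \cap K \neq \emptyset$.

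For the inductive step at $n \geq 2$, supersolvability (upper-triangularizability) of $\gg$ furnishes a codimension-one ideal $\hh \subset \gg$; pick $Z \in \gg \setminus \hh$ so that $\gg = \hh \oplus \RR Z$. The subalgebra $\hh$ is itself supersolvable and tracks $X$, so the induction hypothesis applied to $\hh$ on the block $K$ gives $E := \Z\hh \cap K \neq \emptyset$. Assume for contradiction that $\Z\gg \cap K = \emptyset$, equivalently $\Z Z \cap E = \emptyset$, so that $Z$ is nowhere zero on the compact set $E$.

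The goal is to produce an essential $X$-sub-block $K_1 \subseteq K$ with $E \subseteq K_1$ and $\Z Z \cap K_1 = \emptyset$; then Theorem~\ref{th:MAINbis}(i) applied to $(X, Z, K_1)$ forces $\msf i_{K_1}(X) = 0$, contradicting essentiality. The construction is a block decomposition: choose a precompact open $V$ with $E \subseteq V \subseteq \overline V \subseteq U$ (where $U$ is isolating for $(X, K)$), arranged so that $\overline V \cap \Z Z = \emptyset$ (possible since $Z$ is nonvanishing on compact $E$) and $\partial V \cap \Z X = \emptyset$. Setting $K_1 := \Z X \cap \overline V$ and $K_2 := \Z X \cap \overline{U \setminus \overline V}$ partitions $K$ into disjoint $X$-blocks with $E \subseteq K_1$ and $\Z\hh \cap K_2 = \emptyset$. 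Applying the induction hypothesis to $\hh$ on $K_2$ forces $K_2$ to be inessential, and additivity of the Poincar\'e--Hopf index then gives $\msf i_{K_1}(X) = \msf i_K(X) \neq 0$, so $K_1$ is essential.

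The main obstacle---what I expect to be the principal technical difficulty---is arranging $\partial V \cap \Z X = \emptyset$ simultaneously with $\overline V \cap \Z Z = \emptyset$ and $E \subseteq V$. Such a $V$ amounts to a clopen decomposition of $K$ separating $E$ from $\Z Z \cap K$, which is routine when those sets lie in distinct connected components of $K$, but fails a priori on a connected component of $K$ containing points of both (the intermediate value theorem forbids a smooth separating hypersurface). Resolving this delicate case should rely on the structural clauses (ii)--(iv) of Theorem~\ref{th:MAINbis}: on any connected component of $K$ where $Z$ is everywhere nonzero, those clauses force the component to be a $C^k$-embedded circle of vanishing $X$-index, hence inessential; combined with the $\gg$-invariance of $E$ (since $\hh$ is an ideal, the flow of each $Z \in \gg$ preserves $\Z\hh$, hence $E$), this should rule out a mixed essential component and permit the decomposition to be carried out.
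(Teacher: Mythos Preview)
The paper does not actually supply a proof of Theorem~\ref{th:liealg}; the statement is followed only by a reference to \cite{Hirsch2015}, so there is nothing here to compare your argument against directly.  Your inductive scheme---pass to a codimension-one ideal $\hh\lhd\gg$, set $E=\Z\hh\cap K$ by the inductive hypothesis, assume $\Z Z\cap E=\varnothing$, and reach a contradiction via a block decomposition $K=K_1\sqcup K_2$ together with Theorem~\ref{th:MAINbis}(i) on $K_1$ and the inductive hypothesis on $K_2$---is the standard route and is correct in outline.

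The obstacle you flag is real, but your sketched resolution is slightly misaimed: invoking Theorem~\ref{th:MAINbis}(ii)--(iv) ``on any connected component of $K$ where $Z$ is everywhere nonzero'' presupposes that such a component is already an isolated $X$-block, which is precisely what is in doubt.  The clean fix is to prove directly that $E$ is \emph{open} in $K$ (it is automatically closed), so that $K_1:=E$ and $K_2:=K\setminus E$ are themselves $X$-blocks.  This follows from the two facts you already noted.  First, since $\hh$ is an ideal and $Z$ tracks $X$, both $\Z\hh$ and $K$ are $\Phi^Z$-invariant, hence so is $E$.  Second, the flow-box computation in the proof of Theorem~\ref{th:MAINbis} (carried out with $Y=Z$, using that $X$ is not $k$-flat on $K$) shows that $K$ is a $C^k$ one-manifold near every point at which $Z\ne 0$, in particular near every point of $E$.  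Thus $E$ is a compact $\Phi^Z$-invariant subset of the one-manifold $K\setminus\Z Z$ on which $Z$ is nonsingular; such a set is a finite union of periodic $Z$-orbits, each a circle that is open in $K\setminus\Z Z$ and hence open in $K$.  With $E$ clopen, Theorem~\ref{th:MAINbis}(i) gives $\msf i_{E}(X)=0$, the inductive hypothesis applied to $(\hh,K\setminus E)$ gives $\msf i_{K\setminus E}(X)=0$, and additivity yields $\msf i_K(X)=0$, the desired contradiction.
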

%------------------------------------------------------------

 Related theorems and counterexamples are discussed in {\sc Hirsch}
 \cite{Hirsch2015}.

%============================================================
\begin{theorem}         \mylabel{th:app1}
%============================================================
Suppose the local flow of $X\in \V^\om (M)$ has a compact attractor
$P\subset M$ with Euler characteristic $\chi (P)\ne 0$. Then there
exists $k\in \Np$ with the following property: 
If $\hh\subset \V^k
(M)$ is a supersolvable Lie algebra that tracks $X$, then $\Z\hh \cap
\Z X \cap P \ne\varnothing$.
\end{theorem}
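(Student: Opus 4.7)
The plan is to reduce Theorem \ref{th:app1} to Theorem \ref{th:liealg} by exhibiting an essential $X$-block $K \subset \Z X \cap P$ together with a positive integer $k$ at which $X$ is nowhere $k$-flat on $K$. Since $P$ is a compact attractor, there is a precompact open trapping neighborhood $U \supset P$ with $\Phi^X_t(\ov U) \subset U$ for all sufficiently large $t$. Any zero of $X$ in $\ov U$ is fixed by the flow and so attracted only to itself, forcing it to lie in $P$. Hence $K := \Z X \cap \ov U = \Z X \cap P$ is compact and $U$ is isolating for $(X, K)$.

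For the index calculation, the flow provides a deformation retraction of $\ov U$ onto $P$, so $\chi(\ov U) = \chi(P)$. The Lefschetz-Hopf theorem applied to $\Phi^X_t|U$ for small $t > 0$ yields fixed-point index equal to $\chi(\ov U)$, and by the footnote following Proposition \ref{th:stability} this fixed-point index coincides with $\msf i(X, U)$. Therefore $\msf i_K(X) = \chi(P) \ne 0$, so $K$ is essential. To select $k$, I appeal to analyticity of $X$: setting aside the degenerate possibility that $X \equiv 0$ on some component of $M$ meeting $P$ (such a component, if it occurs, is forced by the attractor property to be a compact component wholly contained in $P$ and can be treated separately), the order $\msf{ord}_p(X)$ is a finite positive integer at every $p \in K$. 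In local analytic charts $\{p : \msf{ord}_p(X) \ge j\}$ is the closed common vanishing locus of the partial derivatives of $X$ of orders $0, \dots, j-1$, so $\msf{ord}$ is upper semicontinuous on its domain and attains a finite maximum $k \in \Np$ on the compact set $K$. Then $X$ is not $k$-flat at any point of $K$, and Theorem \ref{th:liealg} applies to any supersolvable Lie algebra $\hh \subset \V^k(M)$ that tracks $X$, yielding $\Z\hh \cap K \ne \varnothing$ and hence $\Z\hh \cap \Z X \cap P \ne \varnothing$.

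The principal obstacle is the index identification $\msf i(X, U) = \chi(P)$. This assembles two classical inputs---the deformation retraction of a trapping neighborhood of an attractor onto the attractor via the flow, and the Lefschetz-Hopf formula for the fixed-point index of the time-$t$ map---with the paper's own identification of the Poincar\'e-Hopf index and Dold's fixed-point index. The order-bound step is routine given analyticity. The degenerate case of a component of $M$ on which $X$ vanishes identically, should it arise inside $P$, requires a supplementary argument (for instance, a common-zero theorem for supersolvable Lie algebras of vector fields on a compact surface of nonzero Euler characteristic, in the spirit of Lima-Plante-Thurston).
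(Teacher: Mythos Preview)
Your approach is essentially the paper's: reduce to Theorem~\ref{th:liealg} by showing that $K=\Z X\cap P$ is an essential $X$-block with index $\chi(P)$, then use analyticity to bound the order of $X$ on $K$. The paper's execution differs only in the index step: rather than Dold's fixed-point index and Lefschetz--Hopf, it invokes Wilson's theorem to produce a compact surface $N$ with $X$ inwardly transverse to $\partial N$ and $P\subset N_0:=\operatorname{int}N$, observes via \v{C}ech cohomology that $\chi(N_0)=\chi(P)$, and then (implicitly) uses the Poincar\'e--Hopf theorem on $N$ to obtain $\msf i(X,N_0)=\chi(N_0)\ne 0$. Your route through the fixed-point index is valid, but to make Lefschetz--Hopf apply cleanly you should take $\ov U$ to be a compact forward-invariant ANR (e.g.\ a smooth compact surface with boundary on which $X$ points inward), so that $\Phi^X_t\colon\ov U\to\ov U$ for all $t\ge 0$ and there are no boundary fixed points for small $t$; this is exactly what Wilson's construction provides, so the two arguments converge. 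On the other hand, you are more explicit than the paper on two points it glosses over: the choice of $k$ via upper semicontinuity of $\msf{ord}_p(X)$ on the compact set $K$, and the degenerate possibility that $X$ vanishes identically on a component meeting $P$.
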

%------------------------------------------------------------
\begin{proof} If  $P=M$ then $M$ is a closed surface and $\chi (M)\ne
  0$.  Therefore $\Z X$ is an essential $X$-block by Poincar\'e's
  Theorem \cite{Poincare85}, and  the conclusion follows from
  Theorem \ref{th:MAINbis}.

Suppose $P\ne M$.  The basin of attraction of $P$ contains a smooth
compact surface $N$ with boundary such that $Y$ is inwardly transverse
to $\p N$. The interior $N_0:=N\,\verb=\=\, \p N$ is a precompact open
set which is positively invariant under $\Phi^Y$ ({\sc F. Wilson}
\cite[Th{.\ }2.2]{Wilson69}).  Therefore
\[t > s\ge 0\implies \Phi^Y_t (N_0)\subset  \Phi^Y_s (N_0), \qquad %%\textstyle
\textstyle \bigcap_{t \ge 0}\Phi^Y_t (N_0) = P.
\]
The inclusion maps $P\hookrightarrow N_0\hookrightarrow N$ induce isomorphisms of
\v{C}ech cohomology groups, hence $\chi (N_0)\ne 0$.  The conclusion
follows from Theorem \ref{th:liealg} appplied to $X:= Y|N_0$ and the
Lie algebra $\gg\subset \V^k (N_0)$ comprising the restrictions of the
vector fields in $\hh$ to $N_0$.
\end{proof}

%============================================================
\begin{example}         \mylabel{ex:P}
%============================================================
Assume 
$P\subset \R 2$ is a compact global attractor for $X\in \V^\om (\R 2)$
and $\hh\subset \V^k(\R 2)$ is a 
supersolvable Lie algebra tracking $Y$.  
Then:
\begin{itemize}
\item {\em $\Z\hh \cap \Z X \cap P\ne\varnothing$.}
\end{itemize}
{\em Proof. }  %% There is a compact smooth surface $N\subset \R 2$ such
%% that $Y$ is transverse to $\p N$, and $M:=N\,\verb=\=\, \p N$ is
%% positively invariant under $Y$ ({\sc F. Wilson}
%% \cite[Th{.\ }2.2]{Wilson69}). 
 $\chi (P)\ne 0$ because $\R 2$ is contractible and $P$ is a global attractor, so 
Theorem \ref{th:app1} yields the conclusion. \qed
 \end{example}
%------------------------------------------------------------

%============================================================
\begin{corollary}               \mylabel{th:app2}
%============================================================
 Let $G$ be a connected Lie group whose Lie algebra is supersolvable,
 with an effective $C^\infty$ action on a compact surface $M$.  If
 $\chi (M)\ne 0$ and the action is analytic on a normal 1-dimensional
 Lie subgroup, then $G$ fixes a point of $M$.
\end{corollary}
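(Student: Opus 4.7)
\smallskip
\noindent\textbf{Proof plan.} The plan is to apply Theorem \ref{th:app1} to the infinitesimal generator of the action of the normal $1$-parameter subgroup $H$, and then lift the common zero of the induced Lie algebra of vector fields on $M$ to a $G$-fixed point.

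Let $\gg$ denote the Lie algebra of $G$ and $\hh\subset\gg$ the $1$-dimensional subalgebra corresponding to $H$. Since $H$ is normal in $G$, $\hh$ is an ideal of $\gg$. Fix a nonzero $\xi\in\hh$ and let $X\in\V^\infty(M)$ be the vector field generating the action of the $1$-parameter group $\{\exp(t\xi)\}$; because the action of $H$ is analytic, in fact $X\in\V^\om(M)$. Note that $X$ is not identically zero: otherwise $H$ would act trivially on $M$, placing $H$ in the kernel of the action and contradicting effectiveness. Let $\bar\gg\subset\V^\infty(M)$ denote the image of $\gg$ under the differential of the action; as a homomorphic image of $\gg$ it inherits a flag of ideals of successive codimension $1$ and is therefore supersolvable.

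The key step is the tracking condition. For each $\eta\in\gg$, the ideal property of $\hh$ gives $[\eta,\xi]=c_\eta\xi$ for some $c_\eta\in\RR$, whence the corresponding vector field $Y_\eta\in\bar\gg$ satisfies $[Y_\eta,X]=c_\eta X$ identically on $M$. In particular, at every $p$ with $X_p\ne 0$ one has $[Y_\eta,X]_p=c_\eta X_p$, which is the defining condition for $Y_\eta$ to track $X$ (with $f\equiv c_\eta$). Hence every element of $\bar\gg$ tracks $X$.

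Now $M$ itself is trivially a compact attractor of the local flow of $X$, and $\chi(M)\ne 0$ by hypothesis, so Theorem \ref{th:app1} supplies $k\in\Np$ such that any supersolvable Lie algebra in $\V^k(M)$ tracking $X$ has a common zero in $\Z X\cap M$. Applied to $\bar\gg\subset\V^\infty(M)\subset\V^k(M)$ this produces a point $p\in M$ at which every element of $\bar\gg$ vanishes, so $p$ is fixed by every one-parameter subgroup $\{\exp(t\eta)\}$ of $G$; since $G$ is connected it is generated by such subgroups, so $G$ fixes $p$. The main place where care is required is the verification of tracking, which is exactly where the hypotheses of normality and $1$-dimensionality of $H$ are used; everything else is a packaging of Theorem \ref{th:app1}.
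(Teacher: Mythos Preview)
Your proof is correct and follows essentially the same route as the paper's: identify the analytic vector field $X$ generated by $H$, observe that the induced Lie algebra of vector fields tracks $X$ because $\hh$ is an ideal, and invoke Theorem~\ref{th:app1} with $P=M$. You supply more detail than the paper does (nonvanishing of $X$ from effectiveness, supersolvability of the image algebra, and the passage from a common zero of $\bar\gg$ to a $G$-fixed point), but the argument is the same.
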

%------------------------------------------------------------
\noindent
The special case in which $M$ is compact and $G$ acts analytically
 is due to {\sc Hirsch \& Weinstein} \cite{HW2000}.
\begin{proof}
The action of $G$ on $M$ induces an isomorphism $\theta$ from the Lie
algebra $\aa$ of $G$ onto a subalgebra $\gg\subset \V^\infty (M)$.
Let $Y\in \aa$ span the Lie algebra of $H$ and set $\theta (Y)=X\in
\gg$. 
Then $X\in \V^\om (M)$ and 
$\gg$ tracks $X$, whence the conclusion 
from Theorem \ref{th:app1}.
\end{proof}

Related results on  Lie 
group actions and Lie algebras of vector fields can be found in the articles 
\cite{Belliart97, BL96, Borel56, Hirsch2010, Hirsch2011, Hirsch2014,
  Lima64, Plante86, Plante88, Plante91, Sommese73, Turiel89, Turiel03}. 

%% {\sc M. Belliart} \cite{Belliart97,}
%%  {\sc A. Borel} \cite{Borel56}
%% {\sc Hirsch} \cite{Hirsch2014},
%% {\sc E. Lima} \cite{Lima64},
%% {\sc J. Plante} \cite{Plante86},
%% {\sc A. Sommese} \cite{Sommese73},
%% {\sc F.-J Turiel} \cite{Turiel03}.

%------------------------------------------------------------
\section{Index calculations}   \mylabel{sec:index}
%------------------------------------------------------------

%============================================================
\begin{proposition}             \mylabel{th:invariance}
%============================================================
Assume $Y, X \in \V^k (M)$ and $Y$ tracks $X$.  
\begin{description}

\item[(i)] $\Z X$ is invariant under $\Phi^Y$.

\item[(ii)] If $p, q\in \Z X$ are in the same  orbit of $\Phi^Y$, then
$\msf {ord}_p (X)=\msf {ord}_q (X)$.  \qed

\end{description}
\end{proposition}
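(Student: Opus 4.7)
The plan is to establish (i) directly from the tracking definition, and (ii) by a jet-comparison argument in local charts combined with a continuity argument in the flow parameter.

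For \textbf{(i)}, the key observation is that for any $p \in \Z X$ the singleton $\{p\}$ is a (stationary) orbit of $X$. The tracking hypothesis asserts that $\Phi^Y_t$ carries orbits of $X$ to orbits of $X$, so $\{\Phi^Y_t(p)\}$ is again a singleton orbit of $X$, which forces $\Phi^Y_t(p) \in \Z X$.

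For \textbf{(ii)}, write $q = \Phi^Y_s(p)$ and set $\phi := \Phi^Y_s$, a $C^k$-diffeomorphism of $M$ with $\phi(p) = q$. Choose $C^\om$ charts at $p$ and $q$ in which $X$ is represented by $C^k$ maps $F_p, F_q \co W \to \R 2$ with $F_p(0) = F_q(0) = 0$, and $\phi$ by a $C^k$-diffeomorphism $\psi$ with $\psi(0) = 0$ and $D\psi(0)$ invertible. Orbit-preservation yields a positive continuous function $\lambda$ on $\psi(W) \setminus \Z{F_q}$ satisfying
\[D\psi(x)\, F_p(x) \;=\; \lambda(\psi(x))\, F_q(\psi(x)).\]
The plan is to compare leading Taylor expansions on both sides. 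The left side has a leading homogeneous term of exact degree $j := \msf{ord}_p X$, nonzero because $D\psi(0)$ is invertible and $F_p^{(j)}(0) \ne 0$. The right side, once $\lambda$ is shown to tend to a positive limit at the origin, has leading term of exact degree $j' := \msf{ord}_q X$ (invertibility of $D\psi(0)$ again ensuring that the composition with $\psi$ does not lower the order). Matching exponents then forces $j = j'$.

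The main obstacle is that $\lambda$ need not extend continuously to $0$ with a nonzero limit a priori --- indeed, an order mismatch $j \ne j'$ would manifest precisely as $\lambda$ blowing up or decaying to zero, so the extension property is tantamount to the desired conclusion. The remedy is a bootstrap in the parameter $s$: for $|t|$ small the map $\Phi^Y_t$ is $C^k$-close to the identity, and the scalar $\lambda_t$ (computed from the ODE $\dot\mu_t = -(f\circ \Phi^Y_{-t})\,\mu_t$ on $M\,\verb=\=\,\Z X$) can be controlled near $p$, so the matching argument runs cleanly and yields $\msf{ord}_{\Phi^Y_t(p)} X = \msf{ord}_p X$ for all sufficiently small $t$. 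Since the integer-valued map $t \mapsto \msf{ord}_{\Phi^Y_t(p)} X$ is thus locally constant on the connected interval joining $0$ to $s$, it is globally constant, giving $\msf{ord}_p X = \msf{ord}_q X$.
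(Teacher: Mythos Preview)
Your argument for (i) is fine and is essentially what the paper intends by ``follows from the definition'': a stationary point of $X$ is a one-point $X$-orbit, and tracking sends it to a one-point $X$-orbit.

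For (ii) you have correctly located the real difficulty: the scalar $\lambda$ relating $(\Phi^Y_s)_*X$ to $X$ is defined only on $M\setminus\Z X$, and whether it extends across $\Z X$ with a nonzero limit is exactly the content of the assertion. Your proposed fix is to control $\lambda_t$ for small $t$ via the ODE $\dot\lambda_t=-(f\circ\Phi^Y_{-t})\lambda_t$ and then propagate by connectedness. The gap is that this ODE gives no control unless $f$ stays bounded near $p$, and nothing in the tracking hypothesis forces that. In fact the boundedness can fail, and with it statement (ii) itself.

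Take $M=\R 2$, \ $Y=\partial/\partial x$, and $X=(x^2+y^2)\,y\,\partial/\partial x$. Then $\Z X=\{y=0\}$. For $c\ne 0$ the line $\{y=c\}$ is a single $X$-orbit (since $x^2+c^2>0$ everywhere), and each point of $\{y=0\}$ is a one-point $X$-orbit; horizontal translation preserves both kinds, so $Y$ tracks $X$. Here $[Y,X]=2xy\,\partial/\partial x$, giving $f=2x/(x^2+y^2)$, which blows up at the origin---so your ODE yields no bound on $\lambda_t$ near $(0,0)$ for any $t\ne 0$. And indeed $\msf{ord}_{(0,0)}(X)=3$ (all first and second partials of $x^2y+y^3$ vanish at the origin, while $\partial^3/\partial x^2\partial y$ does not), whereas $\msf{ord}_{(x_0,0)}(X)=1$ for every $x_0\ne 0$ (since $\partial_y(x^2y+y^3)\big|_{(x_0,0)}=x_0^2\ne 0$). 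Both points lie on the $Y$-orbit $\{y=0\}$, so (ii) fails.

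Thus your bootstrap cannot be completed, and the paper's one-line justification (``follows from the definition of tracking'') does not engage with the issue either; as written, part (ii) is not correct without further hypotheses on $X$.
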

%------------------------------------------------------------
\begin{proof} Follows from the definition of tracking.
\end{proof}
These properties of the index function are crucial: 
%================================================================
\begin{proposition}[{\sc  Stability}]   \mylabel{th:stability}
%================================================================
Let $U\subset M$ be  isolating for $X$. 
\begin{description}

\item[(a)] If $\msf i (X, U)\ne 0$ then $\Z X\cap U\ne\varnothing$. 

\item[(b)] If   $Y$ is sufficiently close to $X$
then  $\msf i (Y,  U)=\msf i (X, U)$.

 \item[(c)] Let $\{X^t\}_{t\in [0, 1]}$ be a deformation of $X$.  If
   each $X^t$ is nonsingular on the frontier of $U$, \,then
$\msf i (X^t,  U)=\msf i (X, U).$
\end{description}
\end{proposition}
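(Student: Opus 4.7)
The plan is to reduce all three parts to the classical homotopy invariance of the Poincar\'e--Hopf index: if $V$ is a continuous vector field on $\overline U$, nonvanishing on $\fr{U}$, and having only finitely many zeros in $U$, then its Poincar\'e--Hopf index $\sum_{p\in \Z V \cap U}\msf{ind}_p(V)$ is invariant under homotopies through vector fields that do not vanish on $\fr{U}$. The first observation to record is that, since $U$ is isolating, $\Z X \cap \fr{U} = \emp$, so in any Riemannian metric there exists $\epsilon_0 > 0$ with $\|X_p\| \ge \epsilon_0$ for all $p \in \fr{U}$. Part (a) is then immediate: if $\Z X \cap U = \emp$ then $X$ itself is its own finite-zero approximation, with Poincar\'e--Hopf index equal to the empty sum $0$, so $\msf i(X, U) = 0$; the contrapositive is (a).

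Next I would prove (b). Suppose $\sup_{\overline U}\|Y - X\| < \epsilon_0/2$. Then $Y$ too is nonzero on $\fr{U}$, so $U$ is isolating for $Y$. Pick a generic $C^0$-approximation $Z$ of $X$ on $\overline U$ with only finitely many (isolated) zeros in $U$ and with $\sup_{\overline U}\|Z - X\| < \epsilon_0/4$; by definition $\msf i(X, U)$ equals the classical Poincar\'e--Hopf index of $Z$. The same $Z$ lies within $\epsilon_0$ of $Y$, so $Z$ is also a legitimate approximation of $Y$ and computes $\msf i(Y, U)$, yielding $\msf i(X, U) = \msf i(Y, U)$.

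For (c) I would exploit compactness. Nonvanishing of each $X^t$ on the compact set $\fr{U}$, together with joint continuity of $(t, p) \mapsto X^t_p$ on $[0, 1] \times \fr{U}$, furnishes a uniform $\epsilon_1 > 0$ with $\|X^t_p\| \ge \epsilon_1$ for every $(t, p)$. Uniform continuity of $t \mapsto X^t$ on $\overline U$ then supplies a partition $0 = t_0 < \cdots < t_n = 1$ with $\sup_{\overline U}\|X^{t_{i+1}} - X^{t_i}\| < \epsilon_1/2$ for each $i$. Applying (b) on each step (with $\epsilon_0 := \epsilon_1$) telescopes to $\msf i(X^0, U) = \msf i(X^1, U)$; in fact the same argument applied at any intermediate parameter shows $t \mapsto \msf i(X^t, U)$ is locally constant, hence constant.

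The main obstacle, buried in the definition of the index itself, is showing that the Poincar\'e--Hopf index of a generic approximation $Z$ does not depend on the choice of $Z$: any two admissible $Z_0, Z_1$ within $\epsilon_0$ of $X$ on $\overline U$, each with finitely many zeros in $U$, must yield equal index sums. The straight-line homotopy $(1 - s) Z_0 + s Z_1$ stays within $\epsilon_0$ of $X$ and hence nonvanishing on $\fr{U}$; a generic perturbation of the associated time-dependent vector field on $\overline U \times [0, 1]$ has zero locus a compact oriented $1$-manifold in $U \times [0, 1]$ whose boundary realizes the signed difference of zero counts. This is the standard differential-topological input (see, e.g., Milnor's \emph{Topology from the Differentiable Viewpoint}), and it is the ingredient on which the entire proposition rests.
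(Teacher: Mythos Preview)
Your argument is correct and is the standard one. The paper, however, does not actually prove this proposition: its entire proof reads ``This is Theorem 3.9 of \cite{Hirsch2015}.'' So you have supplied a self-contained justification where the paper simply defers to an external reference. Your reduction of (c) to (b) via a partition of $[0,1]$, and your handling of well-definedness through the straight-line homotopy together with the cobordism argument from Milnor, are exactly the expected ingredients. The only organizational quibble is that the well-definedness of $\msf i(X,U)$ logically precedes (b), since your proof of (b) already uses that \emph{any} finite-zero approximation within $\epsilon_0$ of $X$ on $\ov U$ computes the same index---but you flag this yourself in the final paragraph.
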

%================================================================
\begin{proof}
This is Theorem 3.9 of  
\cite{Hirsch2015}.
\end{proof}

%============================================================
\begin{proposition}             \mylabel{th:wedge}
%============================================================
Assume  $Y, Y' \in \V (M)$ and  $U\subset M$ 
is isolating for both $Y$ and $Y'$.  Assume 
$N:=\ov U$ is a compact 
$C^1$ surface  such that  $Y_p$ and  $Y'_p$ are linearly dependent at
all $p\in \p N$. Then $\msf i (Y,U)=\msf i(Y', U)$.
\end{proposition}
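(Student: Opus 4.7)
The plan is to construct a continuous one-parameter family $\{X^t\}_{t\in[0,1]}\subset\V(M)$ with $X^0=Y$ and $X^1=Y'$ such that each $X^t$ is nonsingular on $\p N$; Proposition~\ref{th:stability}(c), applied to the isolating neighborhood $U$, then yields $\msf i(Y,U)=\msf i(Y',U)$ immediately. The naive straight-line deformation $(1-t)Y+tY'$ does not suffice, because at $p\in\p N$ with $Y'_p=\lambda(p)Y_p$ and $\lambda(p)<0$ the combination $((1-t)+t\lambda(p))Y_p$ vanishes for some $t\in(0,1)$; one must go around rather than through.

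The isolating hypothesis forces $Y$ and $Y'$ to be nonvanishing on $\p N$, and linear dependence then gives a continuous nowhere-zero function $\lambda\co\p N\to\RR$ with $Y'_p=\lambda(p)Y_p$. Since $\p N$ is a compact $1$-manifold it splits as a disjoint union of finitely many circles $C_1,\dots,C_m$, and $\lambda$ has a well-defined constant sign $\epsilon_i\in\{\pm1\}$ on each $C_i$. My first main step is to homotope $Y|_{\p N}$ to $Y'|_{\p N}$ through nowhere-vanishing sections of $TM|_{\p N}$, one component at a time. On $C_i$ with $\epsilon_i=+1$ the straight-line formula $s\mapsto((1-s)+s\lambda(p))Y_p$ stays in the open ray $\RR_{>0}\cdot Y_p$ and is automatically nonzero. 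On $C_i$ with $\epsilon_i=-1$ I would fix a Riemannian metric, work in the local frame $(Y_p,Y_p^\perp)$, and connect $(1,0)$ to $(\lambda(p),0)$ in $\RR^2\setminus\{0\}$ by a semicircular arc depending continuously on $p$. The fact that such a continuous choice exists all the way around $C_i$ is exactly the identity $\deg(-\operatorname{id}_{S^1})=+1$: the maps $\hat Y|_{C_i}$ and $\hat Y'|_{C_i}=-\hat Y|_{C_i}$ into the unit sphere bundle of $TM|_{C_i}$ have equal degree and so lie in the same homotopy class of nonvanishing sections.

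Once a boundary homotopy is in hand, I would extend it to all of $N$ as follows. Pick a closed collar $c\co\p N\times[0,1]\hookrightarrow N$ whose image is disjoint from $\Z Y\cup\Z{Y'}$, possible because both zero sets are compact and lie inside $U$. Define $X^t$ on the collar by suspending the boundary homotopy across the collar direction so that $X^0=Y$ and $X^1=Y'$ on the collar and $X^t$ stays nonzero throughout; off the collar, set $X^t=(1-t)Y+tY'$, which is harmless since the only potentially bad points of this straight line lie on $\p N$, which sits inside the collar. A standard partition-of-unity smoothing produces the required continuous family, and Proposition~\ref{th:stability}(c) delivers the equality of indices.

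The main obstacle is the rotation construction on the circles $C_i$ with $\epsilon_i=-1$: a continuous choice of rotation sense is needed around $C_i$, and if $TM|_{C_i}$ is non-orientable (which can occur when $M$ itself is non-orientable and $C_i$ is an orientation-reversing loop) no such global choice exists. This is addressed by passing to the orientation double cover of a tubular neighborhood of $C_i$, performing the rotation homotopy on the oriented cover, and then descending; the descent is well defined because the fiberwise antipode $v\mapsto-v$ commutes with the deck involution, so the two lifts of the constructed homotopy match up on the base.
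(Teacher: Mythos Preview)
Your strategy---deform $Y$ to $Y'$ through vector fields nonvanishing on $\p N$ and invoke Proposition~\ref{th:stability}(c)---is precisely the mechanism the paper points to; the paper itself gives no self-contained argument, citing \cite[Prop.~3.11]{Hirsch2015} and calling the result a consequence of Proposition~\ref{th:stability}.  Your boundary homotopy (straight line when $\epsilon_i=+1$, rotation through $Y^\perp$ when $\epsilon_i=-1$) and the collar extension are the right ingredients, and the two-step extension (homotopy extension property on a collar, then straight line) is routine once the boundary homotopy is in hand.

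The flaw is in your last paragraph.  The double-cover descent does \emph{not} work: the deck involution of the orientation cover reverses orientation and therefore sends $(\pi^*Y)^\perp$ to its negative, so the rotation homotopy upstairs is not equivariant and does not descend; that the fiberwise antipode commutes with the deck transformation is irrelevant.  In fact, over the nontrivial $\R 2$-bundle on $S^1$ the sections $Y$ and $-Y$ lie in \emph{distinct} homotopy classes of nonvanishing sections (a short winding-number computation shows the obstruction is an odd integer), so no boundary homotopy of the kind you want can exist in that situation.  The correct resolution is that the case never arises: each $C_i$ is a boundary circle of the compact surface $N$ and hence carries a collar $C_i\times[0,\varepsilon)\hookrightarrow N$, so $TM|_{C_i}=TN|_{C_i}\cong TC_i\oplus\nu$ splits as the sum of two trivial line bundles (tangent to $C_i$ and inward normal) and is in particular orientable, whether or not $M$ is.  Replace your double-cover paragraph with this observation; then the frame $(Y_p,Y_p^\perp)$ is globally defined along every $C_i$ and your rotation homotopy goes through unobstructed.
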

%------------------------------------------------------------
\begin{proof}
This consequence of Proposition \ref{th:stability} is a special case
of \cite[Prop. 3.12]{Hirsch2013} or 
 \cite[Prop. 3.11]{Hirsch2015}.  
\end{proof}

%============================================================
\begin{proposition}             \mylabel{th:lines3}
%============================================================
Let $K\subset M$ be a block of zeroes for $X\in \V^k (M)$, and $U\subset
M$ an isolating neighborhood for $(X,K)$.  If $X|U$ is controlled by a
line field $\Lam$ on $U$, then  $\msf i_K (X) =0$.
\end{proposition}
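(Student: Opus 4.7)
The plan is to apply Proposition \ref{th:wedge} by comparing $X$ with a nowhere-zero vector field tangent to the line field $\Lam$. First I would shrink $U$ slightly, keeping it isolating for $(X,K)$, so that $\ov U$ is a compact $C^1$ surface-with-boundary, which is the setting required by Proposition \ref{th:wedge}.

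Consider first the case in which $\Lam$ is orientable on $U$. Then $\Lam$ admits a continuous nowhere-zero tangent section $Z\in \V (U)$. At every boundary point $p\in \p \ov U$ both $X_p$ and $Z_p$ lie in $\Lam_p$, so they are linearly dependent, and since $Z$ has no zeroes in $\ov U$ we have $\msf i (Z, U)=0$. Proposition \ref{th:wedge} yields
\[
\msf i_K (X) \;=\; \msf i (X, U) \;=\; \msf i (Z, U) \;=\; 0.
\]

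In general $\Lam$ need not be orientable, so I would pass to the orientation double cover $\pi\co \tilde U \to U$ of the $\ZZ/2$-bundle associated to $\Lam$; a point of $\tilde U$ is a pair $(p, o)$ with $o$ an orientation of $\Lam_p$. The pullback $\pi^* \Lam$ is an oriented line field on the smooth surface $\tilde U$, and $X$ lifts to a $C^k$ vector field $\tilde X$ on $\tilde U$ controlled by $\pi^*\Lam$, with zero set $\pi^{-1}(K)$ isolated by $\pi^{-1}(U)$. Applying the orientable case just proved on $\tilde U$ gives $\msf i (\tilde X, \pi^{-1}(U)) = 0$. Because $\pi$ is a local diffeomorphism, each zero of a generic approximation of $X$ in $U$ has exactly two preimages of the same local index, so $\msf i (\tilde X, \pi^{-1}(U)) = 2\,\msf i_K (X)$, and the desired conclusion $\msf i_K (X) = 0$ follows.

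The main obstacle is making the ``two-sheeted'' index identity rigorous while verifying that the smoothing of $\p U$ and the covering construction preserve the isolating property. An alternative route avoiding double covers would be to construct a $C^k$ perturbation of $X$ inside $U$ that is nowhere zero and agrees with $X$ on $\p U$ by multiplying local sections of $\Lam$ by a suitable scalar field; in the non-orientable case, however, coordinating such local sections globally seems to require essentially the same double-cover device.
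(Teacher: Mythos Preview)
Your proposal is correct and follows essentially the same route as the paper: shrink $U$ so that $\ov U$ is a compact $C^1$ surface, handle the orientable case via Proposition~\ref{th:wedge} with a nowhere-zero section of $\Lam$, and in the nonorientable case pass to the orientation double cover of $\Lam$, apply the orientable case upstairs, and use the identity $\msf i(\tilde X,\pi^{-1}(U))=2\,\msf i_K(X)$ obtained from a finite-zero approximation. The paper makes exactly these moves (with the cosmetic difference that it takes the cover over a slightly larger open set $V\supset \ov U$), so your outline matches the intended argument.
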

%------------------------------------------------------------
\begin{proof}  By shrinking $U$ slightly, we assume $N:=\ov U$ is a
  compact $C^1$ surface.

Suppose $\Lam$ is an orientable line field.  Then $\Lam$ controls a
nonsingular vector field $Y$ on $N$, and $\msf i (Y,U) =0$ because $\Z
Y=\emp$.  Evidently $U$ is isolating for both $X$ and $Y$, and $X_p$,
$Y_p$ are linearly dependent at all $p\in N$.  Proposition
\ref{th:wedge} implies $\msf i (X, U)=\msf i (Y, U)= 0$, hence $\msf
i_K (X)=0$.

Now suppose $\Lam$ is nonorientable.  There is a double covering
$\pi\co \tilde V\to V$ of an open neighborhood $V\subset M$ of $N$,
isolating for $(X, K)$, such that $\Lam$ lifts to an orientable line
field on $\tilde V$.  The orientable case shows that the vector field
$\tilde X$ on $\tilde V$ that projects to $X|V$ under $\pi$ has index
zero in $\tilde V$.

Fix $X_1\in \V (V)$  with  $\Z{X_1}$ finite, and such that 
the sum of the Poincar\'e-Hopf indices of the zeroes of $X_1$ equals
$\msf i (X, V)$.
 Define $\tilde X_1\in \V (\tilde V)$ to be the vector field
 projecting to $X_1$ under $\pi$.  Each zero $p$ of $X_1$ is the image
 under $\pi$ of exactly two zeroes $q_1, q_2$ of $\tilde X_1$, and the
 Poincar\'e-Hopf indices of $\tilde X$ at $q_1$ and $q_2$ both equal
 the Poincar\'e-Hopf index of $X$ at $p$.  Therefore
\[0=\msf i (\tilde X_1, \tilde V)= 2\msf i (X_1, V) =
 2\msf i (X,V)=2\msf i_K(X),
\]
completing the proof.
\end{proof}

Other calculations of indices can be found in
\cite{Dold72, Gottlieb86, Jubin09, Morse29, Pugh68}.

%------------------------------------------------------------
\section{Proof of Theorem \ref{th:MAINbis} }   \mylabel{sec:proof}
%------------------------------------------------------------
%============================================================
\begin{lemma}           \mylabel{th:lk}
%============================================================
Let $L\subset K$ be a component.  Then $X$ has the same order at each
point of $L$.
\end{lemma}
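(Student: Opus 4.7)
The plan is to first establish that the $\Phi^Y$-orbit through any point $p\in L$ is contained in $L$, then invoke Proposition \ref{th:invariance}(ii) to conclude that the order is constant on each such orbit, and finally upgrade this to constancy on all of $L$ via the connectedness of $L$.

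First I would verify the orbit-containment claim. By hypothesis (c) of Theorem \ref{th:MAINbis} we have $Y_p\ne 0$ for every $p\in K$, so the $\Phi^Y$-orbit $\gamma_p$ through $p\in L$ is a nontrivial smooth curve. Proposition \ref{th:invariance}(i) gives $\gamma_p\subset \Z X$. To see that in fact $\gamma_p\subset K$, consider the set $T:=\{t:\Phi^Y_t(p)\in K\}$ in the domain of the orbit. It is closed because $K$ is closed. It is also open: if $\Phi^Y_{t_0}(p)\in K\subset U$, then by continuity $\Phi^Y_t(p)\in U$ for $t$ near $t_0$, and combined with $\Phi^Y_t(p)\in\Z X$ this gives $\Phi^Y_t(p)\in \Z X\cap U\subset \Z X\cap\ov U=K$. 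Since $T$ is clopen and contains $0$, it is the entire domain, so $\gamma_p\subset K$. Connectedness of $\gamma_p$ and the fact that $L$ is a connected component of $K$ then force $\gamma_p\subset L$. Proposition \ref{th:invariance}(ii) now says $\mathsf{ord}_X$ is constant along $\gamma_p$.

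For the final step, let $j_0:=\min_{q\in L}\mathsf{ord}_X(q)$, which is well-defined by hypothesis (b), and set $N:=\{q\in L:\mathsf{ord}_X(q)=j_0\}$. The set $N$ is nonempty, and it is open in $L$: if $\mathsf{ord}_X(q)=j_0$ then some partial derivative of order $j_0$ is nonzero at $q$, hence nonzero on a $C^k$-neighborhood, and since $j_0$ is the minimum on $L$ this forces order exactly $j_0$ on that neighborhood in $L$. By the previous step, $N$ is also $\Phi^Y$-invariant. By connectedness of $L$, it suffices to prove $N$ is closed, which is equivalent to showing that the upper semi-continuous function $\mathsf{ord}_X$ is in fact lower semi-continuous on $L$.

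The main obstacle is this lower-semi-continuity: a priori, $\mathsf{ord}_X$ could jump up at a limit point $q$ of $N$, since continuity of the $j_0$-jet only prevents it from jumping down. To rule this out I would use a flow-box normalization of $Y$ near $q$: in coordinates $(x,y)$ on a flow-box $W$ with $Y=\p/\p x$, write $X=a(x,y)\,\p/\p x+b(x,y)\,\p/\p y$. The tracking relation $[Y,X]=fX$ on $\{X\ne 0\}$ translates to $\p a/\p x=fa$ and $\p b/\p x=fb$ there, so the ODE in $x$ shows $(a,b)(x,y)$ is a positive scalar multiple of $(a,b)(0,y)$. This reduces the order of $X$ at a point $(x,y_0)\in L$ to the order of vanishing in $y$ of the transversal section $(\tilde a,\tilde b)(y):=(a,b)(0,y)$ at $y_0$. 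Combined with the $Y$-invariance of $N$ and the global connectedness of $L$, this should reduce the problem to a one-dimensional statement about order of vanishing of $(\tilde a,\tilde b)$ on a closed subset of the transversal, where the empty interior of $L$ in $M$ (guaranteed by hypothesis (b), since an interior point of $\Z X$ would be $k$-flat) can be used to prevent the order from jumping at limit points of $N$.
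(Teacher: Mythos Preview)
The paper's proof is two lines: the sets $L_j=\{p\in L:\msf{ord}_p(X)=j\}$, $j=1,\dots,k$, are disjoint, cover $L$ (by hypothesis (a) of Theorem~\ref{th:MAINbis}), and are relatively open in $L$; connectedness of $L$ then gives $L=L_{j}$ for a single $j$. Your plan is far more elaborate, passing through $\Phi^Y$-invariance of $L$, a minimum-order set $N$, and a flow-box reduction to a transversal.

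You are right that openness of each $L_j$ --- equivalently, lower semi-continuity of $\msf{ord}$ on $L$ --- is the real content, and the paper's two-line proof does not spell it out. But your proposed closing mechanism has a gap. First, the ODE $\partial_x(a,b)=f\,(a,b)$ holds only off $\Z X$, so it does not by itself identify the order of $X$ at $(x,y_0)$ with the transversal order of $(\tilde a,\tilde b)$ at $y_0$; that identification needs Proposition~\ref{th:invariance}(ii) (constancy of order along the $\Phi^Y$-arc forces all mixed partials $\partial_x^a\partial_y^b F$ with $b<l$ to vanish along $\{y=y_0\}$, leaving only $\partial_y^l$). Second, and more seriously, ``empty interior of $L$ in $M$'' does not by itself prevent the transversal order of a $C^k$ map from jumping at an accumulation point of its zero set, so your final step as written does not close. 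What does close it --- and what the paper carries out in the paragraphs immediately following this lemma --- is the Taylor factorization: since the order $l$ at $p$ satisfies $l\le k$, one gets $F(x,y)=y^{\,l} g(x,y)$ with $g(0,0)\ne 0$, so $\Z X$ near $p$ is exactly the single arc $\{y=0\}$. That arc is then a full neighborhood of $p$ in $L$, on which the order is identically $l$ by Proposition~\ref{th:invariance}(ii). Openness of $L_j$ follows at once, and the one-line connectedness argument finishes; the set $N$ and the empty-interior considerations are unnecessary detours.
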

%------------------------------------------------------------
\begin{proof} The sets  $L_j=\{p\in L\co\msf {ord}_p (X)=j\}$,
  $j\in\{1,\dots,k\}$ are relatively open in $L$ and mutually
  disjoint.  As $L$ is connected and covered by the $L_j$, it
  coincides with  one of them. 
\end{proof}

Let $p\in K$ be arbitrary. 
Choose a $C^{k+1}$ {\em flowbox} $h_p$ for $Y$ centered at $p$:
%============================================================
\begin{equation}                \label{eq:hp}
%============================================================
h_p\co W_p' \approx W_p =J_p\times J'_p \subset \R 2, \quad h (p)= (0,0). 
\end{equation}
%------------------------------------------------------------
This means  $W'_p$ is open in $M$,\,
$J_p, J_p' \subset \RR$ are open intervals around $0$, and the
$C^{k+1}$ diffeomorphism $h_p$ transforms 
$Y|W'_p$ to the constant vector field 
$\left.\pd{x}\right| W_p$,
 where $x, y$ are the usual planar coordinates.  Notice that $h_p (K\cap
 W'_p)=J_p\times \{0\}$ because $K$ is $Y$-invariant.

The transform of $X|W'_p$ by $h_p$ is a $C^k$ vector field
\[\hat X (p)\in \V^k (W_p), \quad  (x,y)\mapsto F_p (x, y),
\]
where $F_p\co W_p\to \R 2$ is $C^k$.  

Set  $\msf {ord}_p (X)=l \in\{1,\dots,k\}$.   The partials of $F_p$ satisfy
\[
F_p^{(i)}(0)=0 \ \text{for $i=0,\dots, l-1$}, \qquad F_p^{(l)} (0)\ne 0.
\]
In a sufficiently small open disk

%============================================================
\begin{equation}                \label{eq:ddp}
%============================================================
D:=D_p\subset \R 2,\quad  (0,0) \in D_p,
\end{equation}
%------------------------------------------------------------
the $l$'th order Taylor expansion of $F_p$ about $(0,0)$ takes the form
%============================================================
\begin{equation}                \label{eq:xxy}
%============================================================
F_p(x,y) =y^l g (x, y), \qquad g(x, y) \ne (0,0) \ \ 
%%
%% \text{for all $(x,y)   \in D$}.
\end{equation}
%============================================================
with $g\co D_p\to \R 2$ continuous.  Therefore 
\[\Z{\hat X(p)}=F_p^{-1} (0,0)= J_p\times\{0\},
\]
 whence $K\cap W'_p$ is an open arc,
relatively   closed in $W'_p$. 

It follows that $K$ has an open cover by open arcs.  Thus $K$ is a
compact $1$-manifold having only finitely many components, each of
which is a topological circle.  The restriction of $\Phi^Y$ to any
component $L\subset K$ is a smooth flow with no fixed points.
Therefore $L$ is a periodic orbit of $\Phi^Y$, and is thus a smooth
submanifold. This proves Theorem \ref{th:MAINbis}(ii).

%============================================================
\begin{lemma}           \mylabel{th:lines0}
%============================================================
$\hat X$ is controlled by a unique line field $\Lam (p)$ on $D_p$.
\end{lemma}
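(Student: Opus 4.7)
The plan is to read $\Lam$ straight off the factorization (\ref{eq:xxy}) already established, namely $F_p(x,y) = y^l g(x,y)$ with $g\colon D_p \to \R 2$ continuous and \emph{nowhere vanishing}. For each $q \in D_p$ I would set $\Lam_q$ equal to the unoriented line through the origin of $T_q \R 2$ spanned by the nonzero vector $g(q)$. Because $g$ is continuous and never zero on $D_p$, the induced assignment $q \mapsto \Lam_q$ is a continuous section of the bundle over $D_p$ whose fibre is the circle of unoriented lines through the origin, so $\Lam$ is a line field in the sense defined before Theorem \ref{th:MAINbis}.

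That $\Lam$ controls $\hat X$ is then immediate: at every $q = (x,y) \in D_p$, the vector $\hat X(q) = y^l g(q)$ is a scalar multiple of $g(q)$, hence lies in $\Lam_q$. For uniqueness, suppose $\Lam'$ is any line field on $D_p$ with $\hat X(q) \in \Lam'_q$ for all $q$. Wherever $y \ne 0$ one has $\hat X(q) \ne 0$, so $\Lam'_q$ is forced to be the line through $\hat X(q) = y^l g(q)$, which is the line through $g(q)$, i.e.\ $\Lam_q$. Using the identification $\Z{\hat X}\cap D_p = (J_p\times\{0\})\cap D_p$ established in the paragraph preceding the lemma, the set $\{y\ne 0\}$ is open and dense in $D_p$. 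Two continuous line fields on $D_p$ agreeing on a dense subset agree everywhere, so $\Lam' = \Lam$ throughout $D_p$.

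Essentially all the substance of the lemma is already packaged in (\ref{eq:xxy}); what remains is formal. The one point to watch is continuity of $q \mapsto \Lam_q$ across $\Z{\hat X}$, and this is precisely the payoff of arranging the factorization so that $g$ is nowhere zero on $D_p$: the map from nonzero tangent vectors to unoriented lines is continuous on its domain, so the potential trouble at $y=0$ disappears. Thus I expect no genuine obstacle in this lemma beyond correctly invoking the factorization already in hand.
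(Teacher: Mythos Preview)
Your proof is correct and rests on the same ingredient as the paper's, namely the factorization (\ref{eq:xxy}) with $g$ continuous and nowhere zero on $D_p$. The paper takes a slightly more roundabout path: it normalizes $F_p$ on $\{y\ne 0\}$ via $\hat F(x,y)=\sgn(y)\,F_p(x,y)/\|F_p(x,y)\|$, checks that this has the uniform limit $g(x,0)/\|g(x,0)\|$ as $y\to 0$, and then takes the line through the resulting extension $\tilde F\co D_p\to \S 1$. Your version simply reads off the line through $g(q)$ directly, which is cleaner and yields the same $\Lam$; you also supply the density argument for uniqueness, which the paper leaves implicit.
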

%------------------------------------------------------------
\begin{proof} 
Consider the unit vector field $\hat F$ on $D':=D\,\verb=\=\,J\times\{0\}$, 
as
\[
  \hat F(x,y):= \sgn (y) \frac{F (x,y)}{\|F (x,y)\|} 
\]
where $\|\cdot\|$ denotes the Euclidean norm.
Equation (\ref{eq:xxy}) implies 
\[\mbox{$\displaystyle\lim_{y\to 0}  \hat F(x,y) = \frac {g(x,0)}{\|g (x,0)\|}$
 \  uniformly in 
 $D'$.}
\] 
Therefore $\hat F$ extends to a unique continuous map
$\tilde F\co D \to \S 1$ (the unit circle).
The desired line field sends $(x, y) \in D$ to the line through
$(0,0)$ spanned by $\tilde F_{(x,y)}$. 
\end{proof}

Next we prove \ref{th:MAINbis}(iii).  For each $p\in K$ let $V_p:=
h_p^{-1} (D_p)$, with notation as in Equations (\ref{eq:hp}),
(\ref{eq:ddp}). Define $V:=\bigcap_{p\in K}V_p$.
Let $\Lam (p)$ be the line field defined in  Lemma \ref{th:lines0}. 
The pullback of $\Lam (p)$ by
$h_p$  is the unique line field
$\Gam (p)$ on $V_p$ controlling $X|V_p$.  The unique line field $\Gam$ on $V$
that restricts to $\Gam (p)$ for each $p\in K$ has the required properties.

Parts (i) and (iv) of \ref{th:MAINbis} are consequences of (iii) and 
Proposition  \ref{th:lines3}.  

Part (v) follows from Propositions 3.13 and 3.14 of  \cite
{Hirsch2015} when $U$ is connected, and this implies the 
general case because the compact set $K$ is covered by finitely many
components of $U$.

%%%%%%%%%%%%%%%%%%%%%%%%%%%%%%%%%%%


\begin{thebibliography}{99}
%%%%%%%%%%%%%%%%%%%%%%%%%%%%%%%%%%%
\small
 \bibitem{Belliart97} M.\ Belliart, {\em Actions sans points fixes sur les
 surfaces compactes}, Math.\ Z.\ {\bf 225} (1997), 453--465


 \bibitem{BL96} M. Belliart \& I. Liousse, {\em Actions affines sur les
    surfaces,} Publications   IRMA,  Universite  de Lille,  
   {\bf 38} (1996) expos\'{e} X  

\bibitem{Bonatti92} C.\ Bonatti, {\em Champs de vecteurs analytiques
commutants, en dimension $3$ ou $4$: existence de z\'{e}ros communs},
Bol.\ Soc.\ Brasil.\ Mat. (N.\ S.) {\bf 22} (1992), 215--247

  \bibitem{Borel56} A.\ Borel, {\em Groupes lin\'eaires algebriques},
     Ann.\  Math.\ {\bf 64} (1956), 20--80 

%%  \bibitem{BorelRemmert61} A.\ Borel \& R. Remmert,
%% {\"Uber kompakte homogene K\:ahlersche Mannigfaltigkeiten,}
%% Math. Ann. {\bf 145} (1961/1962), 429–439 


\bibitem{Dold72} A. Dold, ``Lectures on Algebraic Topology,'' Die
  Grundlehren der matematischen Wissenschaften Bd. 52, second edition.
  Springer, New York 1972

 \bibitem{Gottlieb86} D.\ Gottlieb, {\em A de Moivre like formula for
   fixed point theory}, in: ``Fixed Point Theory and its Applications
   (Berkeley, CA, 1986).''   Contemporary Mathematics {\bf 72}
   Amer.\ Math.\ Soc., Providence, RI  1988
  
%%  \bibitem{GrifHarris78} P.\ Griffiths \& J.\ Harris,  ``Principles of
%%    Algebraic Geometry.'' John Wiley \& Sons, New York 1978

%% \bibitem{Halperin01} Y. Felix, S. Halperin \& J.-C. Thomas,
%% ``Rational homotopy theory,'' Graduate Texts in Mathematics No. 205.
%% Springer-Verlag, New York  (2001)

%%  \bibitem{Halperin77} S. Halperin, {\em Finiteness in the minimal
%%    models of 
%%    Sullivan},  Trans. Amer. Math. Soc.{\bf  230} (1977) 173--199.

%% \bibitem{Hermann65} R. Hermann, {\em Compactification of homogeneous
%%   spaces. I.} J. Math. Mech. 14 (1965) 655–678

%% \bibitem{Hirsch76} M.\ Hirsch, ``Differential Topology.'' Springer-Verlag, New
%%  York 1976

\bibitem{HW2000} M.\ Hirsch \& A.\,Weinstein, {\em Fixed points of
analytic actions of supersoluble Lie groups on compact surfaces},
Ergod.\ Th.\ Dyn.\ Sys.\ {\bf 21} (2001), 1783--1787

\bibitem{Hirsch2010}  M.\ Hirsch,   {\em Actions of Lie groups
  and Lie algebras on manifolds},  in
%%
``A Celebration of the Mathematical Legacy of Raoul Bott.'' 
Centre de Recherches Math\'{e}matiques, U. de Montr\'{e}al.
Proceedings \& Lecture Notes  {\bf 50}, (P. R. Kotiuga, ed.),
Amer. Math. Soc. Providence RI \,2010


\bibitem{Hirsch2011}  M.\ Hirsch, {\em Smooth actions of Lie groups
   and Lie algebras on manifolds,}  
  J. Fixed Point Th. App. {\bf 10} (2011), 219--232

\bibitem{Hirsch2013} M.\ Hirsch, {\em Zero sets of Lie algebras of
  analytic vector fields on real and complex 2-manifolds},
  http://arxiv.org/abs/1310.0081 (2013)

 \bibitem{Hirsch2014}  M.\ Hirsch, {\em
 Fixed points of local actions of nilpotent Lie
 groups on surfaces}, http://arxiv.org/abs/1405.2331 (2014) 

\bibitem{Hirsch2015} M.\ Hirsch, {\em 
Zero sets of Lie algebras of analytic vector fields on real and complex
  2-manifolds}, submitted (2015)

\bibitem{Hopf25} H.\ Hopf, {\em Vektorfelder in Mannifgfaltigkeiten},
Math.\ Annalen {\bf 95} (1925), 340--367


%%  \bibitem{Humphreys75} J.\ Humphreys, ``Linear Algebraic Groups.''
%%  Springer-Verlag, New York 1975

%% \bibitem{Jacobson62} N.\ Jacobson, ``Lie Algebras.'' John Wiley \&
%%   Sons, New York 1962 


 \bibitem{Jubin09} B.\ Jubin, {\em A generalized Poincar\'e-Hopf index
   theorem,}(2009) \\
 http://arxiv.org/abs/0903.0697

 \bibitem{Lima64} E.\ Lima, {\em Common singularities of commuting vector
 fields on $2$-manifolds}, Comment.\ Math.\ Helv.\ {\bf 39 } (1964),
 97--110


%% \bibitem{Lo64} S.\ {\L}ojasiewicz, {\em Triangulation of semi-analytic
%% sets},  Ann.\ Scuola Norm. Sup. Pisa (3)  {\bf 18} (1964), 449--474

 
%% \bibitem{Molino93}  P.\ Molino, Review of Bonatti \cite {Bonatti92},
%%   Math Reviews 93h:57044, Amer.\ Math.\ Soc.\ (1993)

 \bibitem{Morse29} M.\ Morse, {\em Singular Points of Vector Fields
   Under General Boundary Conditions,}  Amer.\ J.\ Math.\ {\bf 52} (1929),
   165--178

%% \bibitem{Mostow05} G. Mostow, {\em A structure theorem for homogeneous
%%   spaces}, Geom. Dedicata {\bf 114} (2005)  87--102
 

%% \bibitem{Munkres63} J.\  Munkres, ``Elementary Differential
%%   Topology.'' Annals Study 54, Princeton University Press, Princeton
%%   NJ 1963


 \bibitem{Plante86} J.\ Plante, {\em Fixed points of Lie group actions on
 surfaces,} Erg.\ Th.\ Dyn.\ Sys.\ {\bf 6} (1986), 149--161


 \bibitem{Plante88} J.\ Plante, {\em Lie algebras of vector fields
 which vanish at a point,} J. London Math. Soc (2) {\bf  38} (1988) 379--384

 \bibitem{Plante91} J.\ Plante, {\em  Elementary zeros of Lie
  algebras of vector fields,} Topology {\bf 30} (1991) 215--222


\bibitem{Poincare85} H.\ Poincar\'e, {\em Sur les courbes d\'efinies par une
\'equation diff\'erentielle,} J.\ Math.\ Pures Appl.\  {\bf 1} (1885),
167--244

 \bibitem{Pugh68} C.\ Pugh, {\em A generalized Poincar\'{e} index
   formula,} Topology {\bf 7} (1968), 217--226 


 \bibitem{Sommese73} A.\ Sommese, {\em Borel's fixed point theorem for
   Kaehler manifolds and an application,} 
   Proc.\ Amer.\ Math.\ Soc.\ {\bf 41} (1973), 51--54.


%% \bibitem{Steenrod51} N. Steenrod, ``The Topology of Fiber Bundles.''
%% Princeton University Press, Princeton NJ 1951

%% \bibitem{Tits62} J. Tits, {\em Espaces homog\'enes complexes
%%   compacts,} Comment. Math. Helv. {\bf 37} (1962/1963), 111–-120

\bibitem{Turiel89} F.-J.\ Turiel, {\em An elementary proof of a Lima's
theorem for surfaces}, Publ.\ Mat.\ {\bf 3} (1989) 555--557

 \bibitem{Turiel03} F.-J.\ Turiel, {\em Analytic actions on compact
   surfaces and fixed points}, Manuscripta Mathematica {\bf 110} (2003),
   195--201

%% \bibitem{Vinberg77} V. Gorbatsevich, A. Onishchik \& E. Vinberg,
%% ``Foundations of Lie Theory and Lie Transformation Groups,''
%% Springer-Verlag,  New York (1977)

%% \bibitem{Wang54} H.-C. Wang, {\em Closed Manifolds with Homogeneous
%%   Complex Structure,}  Amer. J. Math. {\bf 76} (1954), 1--32

%% \bibitem{Whitehead40} J.\ H.\ C.\  Whitehead, {\em On $C^1$
%%   complexes}, Annals of   Math.\ {\bf 41}  (1940), 809--824

\bibitem{Wilson69} F.\ W.\ Wilson, {\em Smoothing derivatives of functions and
  applications,} Trans. Amer. Math. Soc. {\bf 139} (1969), 413--428

\end{thebibliography}
\end{document}